\newtheorem{example}{Example}[section]
\newtheorem{theorem}[example]{Theorem}
\newtheorem{corollary}[example]{Corollary}
\newtheorem{lemma}[example]{Lemma}
\title{\textbf{On the Non-Commuting Graph of Dihedral Group}} 
\author{S.M.S.~Khasraw$^1$, I.D.~Ali$^2$ and R.R.~Haji$^3$\\
$^{1,2,3}$Department of Mathematics, College of Education,\\ Salahaddin University-Erbil, Erbil, Kurdistan Region, Iraq\\
$^1$sanhan.khasraw@su.edu.krd, $^2$evan.ali1@su.edu.krd,\\ $^3$rashad.haji@su.edu.krd}
\date{}
\begin{document}
\maketitle

\begin{center}
{\large{\textbf{Abstract}}}\\
\end{center}
For a nonabelian group G, the non-commuting graph $\Gamma_G$ of $G$ is defined as the graph with vertex set $G-Z(G)$, where $Z(G)$ is the center of $G$, and two distinct vertices of $\Gamma_G$ are adjacent if they do not commute in $G$. In this paper, we investigate the detour index, eccentric connectivity and total eccentricity polynomials of non-commuting graph on $D_{2n}$. We also find the mean distance of non-commuting graph on $D_{2n}$. 

\section{Introduction}
The concept of non-commuting graph of a finite group has been introduced by Abdollahi \textit{et al} in 2006 \cite{MR0294487}. For a non-abelian group $G$, associate a graph $\Gamma_G$ with it such that the vertex set of $\Gamma_G$ is $G-Z(G)$, where $Z(G)$ is the center of $G$, and two distinct vertices $x$ and $y$ are adjacent if they don't commute in $G$, that is, $xy \neq yx$. Several works on assigning a graph to a group and investigation of algebraic properties of group using the associated graph have been done, for example, see \cite{bertram1983some, bertram1990graph, ali2016commuting}.\\
All graphs are considered to be simple, which are undirected with no loops or multiple edges. Let $\Gamma$ be any graph, the sets of vertices and edges of $\Gamma$ are denoted by $V(\Gamma)$ and $E(\Gamma)$, respectively. The cardinality of the vertex set $V(\Gamma)$ is called the \textit{order} of the graph $\Gamma$ and is denoted by $|V(\Gamma)|$ and the number of edges of the graph $\Gamma$ is called the \textit{size} of $\Gamma$, and denoted by $|E(\Gamma)|$. The graph $\Gamma$ is called \textit{split} if $V(\Gamma)=S\cup K$, where $S$ is an independent set and $K$ is a complete set. For a vertex $v$ in $\Gamma$, the number of edges incident to $v$ is called the \textit{degree} of $v$ and is denoted by $deg_\Gamma(v)$. The \textit{eccentricity} of a vertex $v$ in $\Gamma$, denoted by $ecc(v)$, is the largest distance between $v$ and any other vertex $u$ in $\Gamma$. For vertices $u$ and $v$ in a graph $\Gamma$, a $u-v$ \textit{path} in $\Gamma$ is $u-v$ walk with no vertices repeated. The shortest (longest) $u-v$ path in a graph $\Gamma$, denoted by $d(u, v)$ $(D(u, v))$, is called the \textit{distance(detour distance)} between vertices $u$ and $v$ in $\Gamma$. The \textit{detour index}, \textit{eccentric connectivity }and \textit{total eccentricity polynomials} are defined as $D(\Gamma_\Omega, x)=\sum_{u,v \in V(\Gamma)}x^{D(u,v)}$ \cite{shahkoohi2011polynomial}, $\Xi(\Gamma, x)=\sum_{u \in V(\Gamma)}deg_\Gamma(u)x^{ecc(u)}$ and $\Theta(\Gamma, x)=\sum_{u \in V(\Gamma)}x^{ecc(u)}$ \cite{dovslic2011eccentric}, respectively. The \textit{detour index} $dd(\Gamma)$, the \textit{eccentric connectivity index} and the \textit{total eccentricity} $\xi^c(\Gamma)$ of a graph $\Gamma$ are the first derivatives of their corresponding polynomials at $x=1$, respectively. A \textit{transmission} of a vertex $v$ in $\Gamma$ is $\sigma (v,\Gamma)=\Sigma_{u\in V(\Gamma)}d(u,v)$. The transmission of a graph $\Gamma$ is $\sigma (\Gamma)=\Sigma_{u\in V(\Gamma)}\sigma (u,\Gamma)$. The \textit{mean(average) distance} of a graph $\Gamma$ is $\mu (\Gamma)=\frac{ \sigma (\Gamma)}{p(p-1)}$, where $p$ is the order of $\Gamma$, see \cite{Gr123, Gr456, Gr789}. In this paper, we study some properties of non-commuting graph of dihedral groups. The dihedral group $D_{2n}$ of order $2n$ is defined by $$D_{2n}=\langle r, s : r^n = s^2 =1, srs = r^{-1} \rangle$$ for any $n \geq 3$, and the center of $D_{2n}$ is
$
Z(D_{2n})=\left\{
\begin{tabular}{ll}
$\{1\}$, & \mbox{ if }$n$ is odd\\
$\{1, r^\frac{n}{2}\}$, & \mbox{ if }$n$ is even.\\
\end{tabular}
\right.
$
Throughout this article, we assume that $\Omega_1=\{r^i : 1 \leq i \leq n\}- Z(D_{2n})$, and $\Omega_2 = \{sr^i : 1 \leq i \leq n \}$. 
This article is organized as follows: In the present section, we give some important definitions and notations. In Section 2, we study some basic properties of the non-commuting graph $\Gamma_{D_{2n}}$ of $D_{2n}$. We see that $\Gamma_{D_{2n}}$ is a split graph if $n$ is an odd integer.\\
In Section 3, we find the detour index, eccentric connectivity and total eccentricity polynomials of the non-commuting graph $\Gamma_{D_{2n}}$. In Section 4, we find the mean distance of the graph $\Gamma_{D_{2n}}$.       

\section{Some properties of the non - commuting graph of $D_{2n}$}

Recall that, for any $n \geq 3$, $D_{2n}=\langle r, s : r^n = s^2 =1, srs = r^{-1} \rangle$, $\Omega_1=\{r^i : 1 \leq i \leq n\}- Z(D_{2n})$, and $\Omega_2 = \{sr^i : 1 \leq i \leq n \}$.  

\vspace{.5cm}

We start with the following lemma, which has been proved in \cite{MR0294487}.

\begin{lemma}\label{lem21}
Let $G$ be any non-abelian finite group and $a$ be any vertex of $\Gamma_G$. Then $deg_{\Gamma_G}(a)=|G|-|C_G(a)|$, where $C_G(a)$ is the centralizer of the element $a$ in the group $G$.  
\end{lemma}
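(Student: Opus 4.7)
The proof is a short counting argument straight from the definitions, so the plan is simply to set up the count carefully and keep track of which elements are being excluded at each step.

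First I would recall the adjacency rule: for $a, b \in V(\Gamma_G) = G - Z(G)$ with $b \neq a$, the vertex $b$ is adjacent to $a$ precisely when $ab \neq ba$, i.e.\ precisely when $b \notin C_G(a)$. Hence the \emph{non-neighbors} of $a$ in $\Gamma_G$ (other than $a$ itself) are exactly the elements of $\bigl(C_G(a) \cap (G - Z(G))\bigr) \setminus \{a\}$. Since $Z(G) \subseteq C_G(a)$, this intersection equals $C_G(a) - Z(G)$, which has cardinality $|C_G(a)| - |Z(G)|$. Removing $a$ itself (which lies in $C_G(a)$ and not in $Z(G)$) gives $|C_G(a)| - |Z(G)| - 1$ non-neighbors.

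Next I would subtract from the total number of other vertices: there are $|V(\Gamma_G)| - 1 = |G| - |Z(G)| - 1$ vertices distinct from $a$. Therefore
\begin{equation*}
\deg_{\Gamma_G}(a) = \bigl(|G| - |Z(G)| - 1\bigr) - \bigl(|C_G(a)| - |Z(G)| - 1\bigr) = |G| - |C_G(a)|,
\end{equation*}
which is the desired equality.

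There is no real obstacle here; the only thing to be careful about is the bookkeeping of the two exclusions (excluding $a$ itself from both the total vertex count and from the non-neighbor count) and the use of the inclusion $Z(G) \subseteq C_G(a)$ to simplify $C_G(a) \cap (G - Z(G))$ to $C_G(a) - Z(G)$. Once these are handled, the $|Z(G)|$ terms cancel and the formula drops out.
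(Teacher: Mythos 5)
Your proof is correct. Note that the paper itself does not prove this lemma --- it simply cites it as a known result from Abdollahi, Akbari and Maimani --- so there is no in-paper argument to compare against. Your counting is the standard one and all the bookkeeping checks out: the non-neighbors of $a$ among the vertices are exactly the elements of $C_G(a) - Z(G)$ other than $a$ itself (using $Z(G) \subseteq C_G(a)$, $a \in C_G(a)$, and $a \notin Z(G)$ since $a$ is a vertex), and subtracting $|C_G(a)| - |Z(G)| - 1$ from the $|G| - |Z(G)| - 1$ other vertices makes the $|Z(G)|$ and $-1$ terms cancel to give $|G| - |C_G(a)|$.
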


According to the above lemma, we can state the following.

\begin{theorem}\label{thm22} 
In the graph $\Gamma_\Omega$, where $\Omega=\Omega_1 \cup \Omega_2$, we have \\
$1.\; deg_{\Gamma_\Omega}(r^i)=n$ for any $n$,\\
$
2. \;deg_{\Gamma_\Omega}(sr^i)=\left\{
\begin{tabular}{ll}
$2n-2$, & \mbox{ if }$n$ is odd\\
$2n-4$, & \mbox{ if }$n$ is even.\\
\end{tabular}
\right.
$
\end{theorem}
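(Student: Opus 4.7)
The plan is to apply Lemma \ref{lem21} directly, which reduces each statement to computing the size of the centralizer $C_{D_{2n}}(a)$ for $a \in \Omega_1 \cup \Omega_2$. Since $|D_{2n}|=2n$, the claim $deg_{\Gamma_\Omega}(r^i)=n$ is equivalent to showing $|C_{D_{2n}}(r^i)|=n$, and the two subcases of part~2 correspond to $|C_{D_{2n}}(sr^i)|=2$ or $4$, respectively. The only computational input I will need from the group is the conjugation identity $r^i s = s r^{-i}$, obtained by iterating the defining relation $srs=r^{-1}$.

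For part~1, I would show $C_{D_{2n}}(r^i) = \langle r \rangle$ for any $r^i \in \Omega_1$. Every power of $r$ commutes with $r^i$, so $\langle r \rangle \subseteq C_{D_{2n}}(r^i)$. For the reverse inclusion, suppose a reflection $sr^j$ commutes with $r^i$. Using $r^i s = s r^{-i}$, one computes $(sr^j) r^i = s r^{j+i}$ and $r^i (s r^j) = s r^{j-i}$, so commutativity forces $r^{2i}=1$. For $n$ odd this means $r^i=1$, and for $n$ even it means $r^i\in\{1,r^{n/2}\}$; in either case $r^i \in Z(D_{2n})$, contradicting $r^i \in \Omega_1$. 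Hence $|C_{D_{2n}}(r^i)|=n$ and the degree equals $2n-n=n$.

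For part~2, fix $sr^i \in \Omega_2$ and I would identify $C_{D_{2n}}(sr^i)$ by testing each of the two types of elements. A rotation $r^j$ commutes with $sr^i$ iff $sr^{i+j} = sr^{i-j}$, i.e.\ $n \mid 2j$, giving $j=0$ always and additionally $j=n/2$ when $n$ is even. A reflection $sr^j$ commutes with $sr^i$ iff $r^{i-j} = r^{j-i}$, i.e.\ $n \mid 2(i-j)$, giving $j=i$ always and additionally $j=i+n/2$ when $n$ is even. Therefore
\[
C_{D_{2n}}(sr^i) =
\begin{cases}
\{1,\; sr^i\}, & n \text{ odd},\\
\{1,\; r^{n/2},\; sr^i,\; sr^{i+n/2}\}, & n \text{ even}.
\end{cases}
\]
Applying Lemma \ref{lem21} yields $deg_{\Gamma_\Omega}(sr^i) = 2n-2$ or $2n-4$ in the respective cases.

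There is no real obstacle here; the only point requiring care is making sure the ``extra'' centralizing element $r^{n/2}$ (respectively $sr^{i+n/2}$) is correctly accounted for when $n$ is even, and conversely that it is correctly \emph{excluded} when $n$ is odd. I would organise the computations as two short case distinctions driven by the divisibility condition $n \mid 2k$, which cleanly separates the odd and even cases.
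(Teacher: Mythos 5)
Your proposal is correct and follows essentially the same route as the paper: both apply Lemma \ref{lem21} and reduce the problem to computing $|C_{D_{2n}}(r^i)|$ and $|C_{D_{2n}}(sr^i)|$, arriving at the same centralizers. The only difference is that you actually derive the centralizers from the relation $r^i s = s r^{-i}$ and the divisibility condition $n \mid 2k$, whereas the paper simply asserts them; your version is therefore slightly more complete.
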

\begin{proof}
1. Since $C_{D_{2n}}(r^i)=\{r^i : 1 \leq i \leq n \}$, then, from Lemma \ref{lem21}, $deg_{\Gamma_\Omega}(r^i) = |D_{2n}|-|C_{D_{2n}}(r^i)|= 2n-n = n$.\\
2. If $n$ is odd, then $C_{D_{2n}}(sr^i) = \{1, sr^i\}$ for all $i$, $1 \leq i \leq n$. This follows that $deg_{\Gamma_\Omega}(sr^i) = 2n-2$ for all $1 \leq i \leq n$. If $n$ is even, then $C_{D_{2n}}(sr^i) = \{1, r^{\frac{n}{2}}, sr^i, sr^{\frac{n}{2}+i}\}$ for all $1 \leq i \leq n$. Thus, $deg_{\Gamma_\Omega}(sr^i)= 2n-4$ for all $1 \leq i \leq n$.   
\end{proof}

\medskip

\begin{theorem}\label{thm23}
Let $\Gamma_\Omega$ be a non-commuting graph on $D_{2n}$.
\begin{enumerate}
\item If $\Omega=\Omega_1$, then $\Gamma_\Omega=\overline{K}_l$, where $l=|\Omega_1|$.
\item If $\Omega=\Omega_2$, then

$\Gamma_\Omega=\left\{
\begin{tabular}{ll}
$K_n$, & \mbox{ if }$n$ is odd\\
$K_n-\frac{n}{2}K_2$, & \mbox{ if }$n$ is even.\\
\end{tabular}
\right.
$\\

where $\frac{n}{2}K_2$ denotes $\frac{n}{2}$ copies of $K_2$. 
\end{enumerate} 
\end{theorem}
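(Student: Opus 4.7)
The plan is to apply the definition of $\Gamma_\Omega$ directly: two distinct vertices are joined by an edge exactly when they fail to commute in $D_{2n}$. So in each part I will identify precisely which pairs of elements of $\Omega$ commute, then read off the graph.

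For part (1), every element of $\Omega_1$ lies in the cyclic subgroup $\langle r\rangle$, which is abelian. Hence any two vertices of $\Gamma_{\Omega_1}$ commute, so no pair is adjacent and $\Gamma_{\Omega_1}=\overline{K}_l$ where $l=|\Omega_1|$. This is essentially immediate.

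For part (2), the key computation is to reduce the product of two reflections using the defining relation $srs=r^{-1}$. From this one gets $r^is=sr^{-i}$ for every $i$, and therefore
\[
(sr^i)(sr^j)=s\cdot(r^is)\cdot r^j=s\cdot sr^{-i}\cdot r^j=r^{j-i},
\qquad
(sr^j)(sr^i)=r^{i-j}.
\]
Consequently $sr^i$ and $sr^j$ commute in $D_{2n}$ if and only if $r^{2(j-i)}=1$, that is, $n\mid 2(j-i)$. With this criterion in hand, the two cases split cleanly on the parity of $n$.

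If $n$ is odd, then $n\mid 2(j-i)$ forces $n\mid j-i$, so distinct reflections never commute; every pair of vertices of $\Omega_2$ is adjacent and $\Gamma_{\Omega_2}=K_n$. If $n$ is even, the condition becomes $\tfrac{n}{2}\mid j-i$, so each $sr^i$ commutes with exactly one other element of $\Omega_2$, namely $sr^{i+n/2}$. The pairs $\{sr^i,sr^{i+n/2}\}$ for $1\le i\le n/2$ partition $\Omega_2$ into $\tfrac{n}{2}$ two-element classes, each consisting of mutually non-adjacent vertices. Hence $\Gamma_{\Omega_2}$ is the complete graph on $n$ vertices with exactly the edges of this perfect matching removed, i.e.\ $K_n-\tfrac{n}{2}K_2$. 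There is no serious obstacle in the argument; the only thing worth being careful about is the derivation of $r^is=sr^{-i}$ from the group relations and the verification that the $n/2$ non-edges really do form a disjoint perfect matching (both of which are routine).
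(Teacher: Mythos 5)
Your proof is correct and follows essentially the same route as the paper: both arguments reduce to determining which pairs of elements of $\Omega$ commute and reading off the adjacency, with $\Omega_1\subseteq\langle r\rangle$ giving the empty graph and the pairs $\{sr^i,sr^{i+n/2}\}$ (for even $n$) giving the removed perfect matching. The only difference is presentational: the paper quotes the centralizers $C_{D_{2n}}(sr^i)$ directly, whereas you derive the commuting criterion $n\mid 2(j-i)$ from the relation $srs=r^{-1}$, which makes your version more self-contained but not a different argument.
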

\begin{proof}
1. The centralizer of $r^i$, $1 \leq i \leq n$, is $C_{D_{2n}}(r^i)=\{r^i : 1 \leq i \leq n \}$ of size $n$, then there is no edge between any pair of vertices in $\Gamma_{\Omega_1}$. Thus, $\Gamma_{\Omega_1}=\overline{K}_l$, where $l=|\Omega_1|$.\\ 
2. \textbf{When $n$ is odd}. Since the element $sr^i$, where $i=1,2,...,n$, has centralizer $C_{D_{2n}}(sr^i)=\{1, sr^i\}$ of size 2, so let $\Omega=\Omega_2=\{sr, sr^2, ..., sr^n\}$. Then the subgraph $\Gamma_{\Omega}=K_n$ is complete.\\
\textbf{When $n$ is even}. Since $C_{D_{2n}}(sr^i) = \{1, r^{\frac{n}{2}}, sr^i, sr^{\frac{n}{2}+i}\}$ for all $1 \leq i \leq n$. Then there is no edge between the vertices $sr^i$ and $sr^{\frac{n}{2}+i}$ in $\Gamma_\Omega$ for all $1 \leq i \leq n$. Therefore, $\Gamma_\Omega= K_n-\frac{n}{2}K_2$    
\end{proof}

\medskip

\begin{theorem}\label{thm24}
Let $n\geq 3$ be an odd integer and $H$ be a subset of $D_{2n}-Z(D_{2n})$. Then $\Gamma_H=K_{1, n-1}$ if and only if $H=\{sr^i, r, r^2, \cdots, r^{n-1}\}$ for some $i$. 
\end{theorem}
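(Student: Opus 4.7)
The plan is to split the equivalence into the two implications and, in both directions, work with the commuting relations already recorded in Theorems~\ref{thm22} and \ref{thm23}. Because $n$ is odd, $Z(D_{2n})=\{1\}$, so $H \subseteq \Omega_1 \cup \Omega_2$ throughout. The key numerical input is that $K_{1,n-1}$ has exactly $n$ vertices, one of degree $n-1$, and the remaining $n-1$ leaves form an independent set.

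For the $(\Leftarrow)$ direction I would verify directly that the given $H$ produces a star. The rotations $r,r^2,\dots,r^{n-1}$ all lie in the abelian subgroup $\langle r\rangle$, so no two of them are adjacent in $\Gamma_H$. On the other hand, the centralizer computation $C_{D_{2n}}(sr^i)=\{1,sr^i\}$ from Theorem~\ref{thm22} shows that $sr^i$ fails to commute with every $r^j$ for $1\le j\le n-1$, so $sr^i$ is adjacent to each of the $n-1$ rotations. This is precisely $K_{1,n-1}$ with centre $sr^i$.

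For the $(\Rightarrow)$ direction the heart of the argument will be a short independent-set remark. By Theorem~\ref{thm23}(2), the restriction of $\Gamma_{D_{2n}}$ to $\Omega_2$ is the complete graph $K_n$ for odd $n$, and the same centralizer calculation used above shows every $sr^i\in\Omega_2$ is non-adjacent in $D_{2n}$ only to $1$. Hence any independent set in $\Gamma_{D_{2n}-Z(D_{2n})}$ that contains an element of $\Omega_2$ has size exactly one. Since the $n-1$ leaves of $K_{1,n-1}$ form an independent set of size $n-1 \geq 2$, they must all lie in $\Omega_1$. But $|\Omega_1|=n-1$, so the leaves are forced to be $\Omega_1=\{r,r^2,\dots,r^{n-1}\}$, and the remaining vertex of $H$ (the centre of the star) must be some $sr^i\in\Omega_2$, giving $H=\{sr^i,r,r^2,\dots,r^{n-1}\}$.

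The only point that requires any care is the independent-set lemma: namely, that for odd $n$ no two reflections and no reflection-rotation pair can be simultaneously independent. Once this is stated cleanly (it is really already inside Theorems~\ref{thm22} and~\ref{thm23}), both directions of the equivalence follow in a few lines, so I do not anticipate a serious obstacle beyond bookkeeping.
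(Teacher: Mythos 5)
Your proof is correct, and both directions ultimately rest on the same centralizer facts the paper uses: for odd $n$, $C_{D_{2n}}(sr^i)=\{1,sr^i\}$ while the rotations commute among themselves. The reverse implication is essentially identical to the paper's (which computes $C_H(sr^i)$ and $C_H(r^j)$ and reads off the star). Where you genuinely differ is the forward implication: the paper disposes of it in one line by citing Theorem~\ref{thm22}, which only records vertex degrees in the full graph $\Gamma_\Omega$ and does not by itself explain why an induced copy of $K_{1,n-1}$ must have this particular vertex set; your independent-set observation --- that for odd $n$ any independent set meeting $\Omega_2$ is a singleton, so the $n-1\ge 2$ leaves must all lie in $\Omega_1$, hence exhaust it, forcing the centre into $\Omega_2$ --- is precisely the argument the paper leaves implicit. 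So your write-up is, if anything, more complete than the paper's own. (A minor remark: the paper's proof opens with ``Suppose $\Gamma_H=K_{1,n}$,'' which is a typo for $K_{1,n-1}$; your statement of the degree sequence of $K_{1,n-1}$ is the correct one.)
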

\begin{proof}
Suppose that $\Gamma_H=K_{1, n}$. By Theorem \ref{thm22}, $H=\{sr^i, r, r^2, \cdots, r^{n-1}\}$ for some $i$. Conversely, suppose $H=\{sr^i, r,  r^2, \cdots, r^{n-1}\}$. Then $C_H(sr^i)=\{sr^i\}$ and $C_H(r^j)=\{r, r^2, \cdots, r^{n-1}\}$ for $1 \leq j \textless n$. Thus, $\Gamma_H=K_{1, n-1}$. 
\end{proof}

\medskip

\begin{corollary}
Let $n\geq 3$ be an odd integer and $\Omega=\Omega_1 \cup \Omega_2$. Then $\Gamma_\Omega$ is a split graph.
\end{corollary}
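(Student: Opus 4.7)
The plan is to observe that the corollary follows almost immediately from Theorem~\ref{thm23}, so the work consists of assembling pieces rather than proving anything new. Recall that a graph is \emph{split} precisely when its vertex set admits a partition into an independent set $S$ and a clique $K$. I propose to exhibit $S=\Omega_1$ and $K=\Omega_2$.

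First I would note that, since $n$ is odd, $Z(D_{2n})=\{1\}$, so $\Omega_1=\{r,r^2,\dots,r^{n-1}\}$ and $\Omega_2=\{sr,sr^2,\dots,sr^n\}$ are disjoint and together cover $D_{2n}-Z(D_{2n})$, giving a genuine partition of $V(\Gamma_\Omega)$. Then I would apply part~(1) of Theorem~\ref{thm23} to conclude that the induced subgraph on $\Omega_1$ is $\overline{K}_{n-1}$, i.e.\ $\Omega_1$ is an independent set in $\Gamma_\Omega$ (the induced subgraph on $\Omega_1$ has no edges, and adding more vertices to the ambient graph does not create edges inside $\Omega_1$). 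Next I would apply part~(2) of Theorem~\ref{thm23} in the odd case to conclude that the induced subgraph on $\Omega_2$ is $K_n$, so $\Omega_2$ is a clique in $\Gamma_\Omega$.

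Having produced the required partition $V(\Gamma_\Omega)=\Omega_1\cup\Omega_2$ with $\Omega_1$ independent and $\Omega_2$ complete, the definition of a split graph is verified and the corollary follows. There is essentially no obstacle here, beyond making sure to invoke the odd parity of $n$ in the right places; the only subtlety worth mentioning is that Theorem~\ref{thm23} describes $\Gamma_{\Omega_1}$ and $\Gamma_{\Omega_2}$ as graphs in their own right, and I would briefly remark that the property of being an independent set or a clique transfers from the induced subgraph to the larger graph $\Gamma_\Omega$, so no further verification of edges internal to $\Omega_1$ or $\Omega_2$ is needed.
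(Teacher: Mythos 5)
Your proof is correct and follows essentially the same route as the paper, which simply states that the result follows from Theorem~\ref{thm23}; you spell out the partition $V(\Gamma_\Omega)=\Omega_1\cup\Omega_2$ with $\Omega_1$ independent and $\Omega_2$ a clique, which is exactly the intended argument. The only difference is that the paper also cites Theorem~\ref{thm24}, which your (complete) argument shows is not actually needed.
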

\begin{proof}
The proof follows from Theorem \ref{thm23} and Theorem \ref{thm24}.
\end{proof}

\medskip

\begin{theorem}
Let $\Gamma_\Omega$ be a non-commuting graph on $D_{2n}$, where $\Omega=\Omega_1 \cup \Omega_2$. We have
$$
|E(\Gamma_\Omega)|=\left\{
\begin{tabular}{ll}
$\frac{3n(n-1)}{2}$, & \mbox{ if }$n$ is odd;\\
$\frac{3n(n-2)}{2}$, & \mbox{ if }$n$ is even.\\
\end{tabular}
\right.
$$
\end{theorem}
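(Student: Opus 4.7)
The plan is to apply the handshake lemma: $|E(\Gamma_\Omega)| = \tfrac{1}{2}\sum_{v \in \Omega} \deg_{\Gamma_\Omega}(v)$, using the degree formulas already established in Theorem \ref{thm22}. So the work reduces to two ingredients: the sizes of $\Omega_1$ and $\Omega_2$, and a split of the sum by parity of $n$.

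First I would record the vertex counts. Since $\Omega_2 = \{sr, sr^2, \ldots, sr^n\}$ consists of $n$ distinct reflections, $|\Omega_2| = n$ regardless of parity. For $\Omega_1 = \{r^i : 1 \leq i \leq n\} - Z(D_{2n})$, the description of $Z(D_{2n})$ given in the introduction yields $|\Omega_1| = n-1$ when $n$ is odd and $|\Omega_1| = n-2$ when $n$ is even.

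Next I would combine these with Theorem \ref{thm22}. When $n$ is odd, every $r^i \in \Omega_1$ contributes degree $n$ and every $sr^i \in \Omega_2$ contributes degree $2n-2$, so
\[
\sum_{v \in \Omega} \deg_{\Gamma_\Omega}(v) = (n-1)\,n + n\,(2n-2) = 3n(n-1),
\]
giving $|E(\Gamma_\Omega)| = \tfrac{3n(n-1)}{2}$. When $n$ is even, the analogous computation uses $|\Omega_1| = n-2$ and $\deg_{\Gamma_\Omega}(sr^i) = 2n-4$, producing
\[
\sum_{v \in \Omega} \deg_{\Gamma_\Omega}(v) = (n-2)\,n + n\,(2n-4) = 3n(n-2),
\]
and hence $|E(\Gamma_\Omega)| = \tfrac{3n(n-2)}{2}$.

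There is no real obstacle here; everything is bookkeeping once Theorem \ref{thm22} and the description of $Z(D_{2n})$ are in hand. The only thing to be careful about is counting the center correctly in the even case, so that $r^{n/2}$ is properly excluded from $\Omega_1$ — this is exactly what produces the difference between $n-1$ and $n-2$ and is ultimately responsible for the two cases in the final formula.
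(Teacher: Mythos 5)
Your proof is correct, but it takes a different route from the paper. The paper counts edges directly by decomposing $E(\Gamma_\Omega)$ into the edges inside the induced subgraph on $\Omega_2$ (a complete graph $K_n$ when $n$ is odd, or $K_n$ minus a perfect matching when $n$ is even, by Theorem \ref{thm23}) plus the edges joining $\Omega_1$ to $\Omega_2$ (every such pair is adjacent), giving $\frac{n(n-1)}{2}+n(n-1)$ in the odd case and $\frac{n(n-2)}{2}+n(n-2)$ in the even case. You instead apply the handshake lemma to the degree formulas of Theorem \ref{thm22}, which reduces everything to the vertex counts $|\Omega_1|$ and $|\Omega_2|$ and a single arithmetic sum; your computations $(n-1)n+n(2n-2)=3n(n-1)$ and $(n-2)n+n(2n-4)=3n(n-2)$ are both right, and you correctly flag that excluding $r^{n/2}$ from $\Omega_1$ in the even case is what drives the case split. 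Your approach is slightly more mechanical and self-contained given Theorem \ref{thm22}, while the paper's makes the location of the edges explicit and leans on the structural description in Theorem \ref{thm23}; either is a complete proof.
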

\begin{proof}
It is clear that $\Omega_1 \cap \Omega_2 =\emptyset$ and $\Omega_1 \cup \Omega_2 =D_{2n}-Z(D_{2n})=\Omega$. According to $n$, there are two cases to consider.\\
\textbf{Case 1.} If $n$ is odd, then the subgraph induced by $\Omega_1$ has no edges and the subgraph induced by $\Omega_2$ is complete. Thus, the number of edges in $\Gamma_\Omega$ is sum of the number of edges in $\langle \Omega_2 \rangle$ and the number of edges from set of vertices in $\Omega_1$ to set of vertices in $\Omega_2$. Therefore, $|E(\Gamma_\Omega)|=\frac{n(n-1)}{2}+
n(n-1)=\frac{3n(n-1)}{2}$.\\ 
\textbf{Case 2.} If $n$ is even, then the subgraph induced by $\Omega_1$ has no edges and the subgraph induced by $\Omega_2$ has $\frac{n(n-1)}{2}-\frac{n}{2}=\frac{n(n-2)}{2}$ edges. Thus, the number of edges in $\Gamma_\Omega$ is sum of the number of edges in $\langle \Omega_2 \rangle$ and the number of edges from set of vertices in $\Omega_1$ to set of vertices in $\Omega_2$. Therefore, $|E(\Gamma_\Omega)|=\frac{n(n-2)}{2}+
n(n-2)=\frac{3n(n-2)}{2}$.   
\end{proof}

\section{Detour index, eccentric connectivity and total eccentricity polynomials of non- commuting graphs on $D_{2n}$}

\begin{theorem}\label{thm31}
Let $\Gamma_\Omega$ be a non-commuting graph on $D_{2n}$, where $\Omega=\Omega_1 \cup \Omega_2$. Then for any $u, v \in \Gamma_\Omega$,
$$
D(u, v)=\left\{
\begin{tabular}{ll}
$2n-2$, & \mbox{ if }$n$ is odd;\\
$2n-3$, & \mbox{ if }$n$ is even.\\
\end{tabular}
\right.
$$
\end{theorem}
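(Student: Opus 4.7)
The plan is to identify the claimed value as $|V(\Gamma_\Omega)|-1$: by Theorem~\ref{thm23} and the definition of $\Omega$, we have $|V(\Gamma_\Omega)|=|\Omega_1|+|\Omega_2|$, which equals $(n-1)+n=2n-1$ when $n$ is odd and $(n-2)+n=2n-2$ when $n$ is even. Since a simple path visits each vertex at most once, $D(u,v)\leq |V(\Gamma_\Omega)|-1$, giving the stated quantity as an upper bound. Equality will follow by showing that $\Gamma_\Omega$ is Hamiltonian-connected, i.e.\ that for every pair of distinct vertices $u,v$ there is a Hamiltonian $(u,v)$-path.

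To prove Hamiltonian-connectedness I would exploit the structural picture from Theorem~\ref{thm23}: $\Omega_1$ is an independent set, $\Omega_2$ induces $K_n$ (if $n$ is odd) or $K_n-\tfrac{n}{2}K_2$ (if $n$ is even), and every possible edge between $\Omega_1$ and $\Omega_2$ is present in $\Gamma_\Omega$. Because $\Omega_1$ is independent, no two $\Omega_1$-vertices can be consecutive in a path; I would split into three cases according to whether $0$, $1$, or $2$ of the endpoints $u,v$ lie in $\Omega_1$ and, in each case, write an explicit interleaving of $\Omega_1$- and $\Omega_2$-vertices, placing the surplus $\Omega_2$-vertices into short contiguous blocks.

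For odd $n$, $\Gamma_{\Omega_2}$ is complete, so any ordering of the $\Omega_2$-vertices inside such a block is automatically a valid subpath and the construction is immediate in each of the three cases. For even $n$, a Hamiltonian path has $2n-3$ edges, and a direct degree-count at $\Omega_1$-vertices shows that exactly $1$, $2$, or $3$ of them are intra-$\Omega_2$ edges, according to whether $0$, $1$, or $2$ of the endpoints lie in $\Omega_1$. These intra-$\Omega_2$ edges must avoid the deleted matching $\{\{sr^i, sr^{i+n/2}\}: 1\leq i\leq n/2\}$; but the surviving graph $K_n-\tfrac{n}{2}K_2$ has $\tfrac{n(n-2)}{2}$ edges, so there is plenty of room to choose.

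The main obstacle is the bookkeeping in the even case: one must exhibit, for each sub-case, an explicit ordering of $\Omega_2$ in which every forced intra-$\Omega_2$ transition is a surviving (non-matching) edge. A convenient strategy is to use a permutation of $\Omega_2$ that never places the antipodal pair $sr^i$, $sr^{i+n/2}$ at two positions dictated by the interleaving to be consecutive; for instance, listing $\Omega_2$ as $sr, sr^2, \ldots, sr^{n/2}, sr^{n/2+2}, sr^{n/2+1}, \ldots$ (or any similar shift) already achieves this for $n\geq 4$. Once such an ordering is fixed and the $\Omega_1$-vertices are woven in between the $\Omega_2$-vertices, one obtains the required Hamiltonian $(u,v)$-path and hence $D(u,v)=2n-3$.
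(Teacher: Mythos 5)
Your approach is essentially the paper's: both reduce the claim to exhibiting a Hamiltonian $u$--$v$ path for every pair of vertices, the stated value being $|V(\Gamma_\Omega)|-1$. The paper, however, merely asserts that such a path exists after describing the adjacency structure, whereas you supply the two halves actually needed --- the upper bound $D(u,v)\le |V(\Gamma_\Omega)|-1$ and the interleaving construction, with the correct count ($1$, $2$ or $3$) of forced intra-$\Omega_2$ edges in the even case --- so yours is the more complete write-up. One small slip: your illustrative ordering $sr,\ldots,sr^{n/2},sr^{n/2+2},sr^{n/2+1},\ldots$ places an antipodal pair consecutively when $n=4$ (there $sr^2$ and $sr^4$ are antipodal); the plain ordering $sr,sr^2,\ldots,sr^n$ already avoids all antipodal adjacencies for every even $n\ge 4$ and is the cleaner choice.
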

\begin{proof}
There are two cases. \textbf{When $n$ is odd}. From Theorem \ref{thm23} and Theorem \ref{thm24}, we see that no two vertices in $\Omega_1$ are adjacent, any pair of distinct vertices in $\Omega_2$ are adjacent, and each vertex in $\Omega_1$ is adjacent to every vertex in $\Omega_2$. Then for all $u, v \in \Omega$, there is a $u - v$ path of length $2n-2$.\\
\textbf{When $n$ is even}. Again, no two vertices in $\Omega_1$ are adjacent, each vertex in $\Omega_1$ is adjacent to every vertex in $\Omega_2$, and any pair of distinct vertices $u$ and $v$ in $\Omega_2$ are adjacent if $u, v \notin \{sr^i, sr^{\frac{n}{2}+i}\}$ for $1 \leq i \leq \frac{n}{2}$. So, for all $u, v \in \Omega$, there is a $u - v$ path of length $2n-3$.
\end{proof}

\medskip

\begin{theorem}\label{thm32}
Let $\Gamma_\Omega$ be a non-commuting graph on $D_{2n}$, where $\Omega=\Omega_1 \cup \Omega_2$. Then
$$
D(\Gamma_\Omega, x)=\left\{
\begin{tabular}{ll}
$(n-1)(2n-1)x^{2n-2}$, & \mbox{ if }$n$ is odd;\\
$(n-1)(2n-3)x^{2n-3}$, & \mbox{ if }$n$ is even.\\
\end{tabular}
\right.
$$
\end{theorem}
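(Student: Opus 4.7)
The plan is to reduce the detour index polynomial to a single monomial by invoking Theorem \ref{thm31}, and then to read off its coefficient as the number of unordered vertex pairs of $\Gamma_\Omega$.

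First I would observe that, by definition, $D(\Gamma_\Omega,x)=\sum_{\{u,v\}\subseteq V(\Gamma_\Omega)} x^{D(u,v)}$, where the sum ranges over the $\binom{|V(\Gamma_\Omega)|}{2}$ unordered pairs of distinct vertices. Theorem \ref{thm31} asserts that $D(u,v)$ takes the single value $2n-2$ when $n$ is odd and $2n-3$ when $n$ is even, so every term in the sum carries the same exponent. Hence $D(\Gamma_\Omega,x)=\binom{|V(\Gamma_\Omega)|}{2}x^{D(u,v)}$, and all that remains is to determine $|V(\Gamma_\Omega)|$ in each parity case.

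Next I would compute the vertex count from the description of $Z(D_{2n})$ in the introduction. When $n$ is odd, $Z(D_{2n})=\{1\}$, so $|V(\Gamma_\Omega)|=|D_{2n}|-|Z(D_{2n})|=2n-1$, giving $\binom{2n-1}{2}=\frac{(2n-1)(2n-2)}{2}=(n-1)(2n-1)$. When $n$ is even, $Z(D_{2n})=\{1,r^{n/2}\}$, so $|V(\Gamma_\Omega)|=2n-2$, giving $\binom{2n-2}{2}=\frac{(2n-2)(2n-3)}{2}=(n-1)(2n-3)$. Assembling the two cases yields exactly the stated formula.

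There is really no serious obstacle here: the heavy lifting was already done in Theorem \ref{thm31}, which collapses the polynomial to a single power of $x$. The only point to be careful about is the convention that the sum defining $D(\Gamma_\Omega,x)$ is over unordered pairs $\{u,v\}$ rather than ordered pairs, which is confirmed by matching the counting $\binom{|V|}{2}$ against the stated coefficients $(n-1)(2n-1)$ and $(n-1)(2n-3)$. Everything else is arithmetic.
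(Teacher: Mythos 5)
Your proposal is correct and follows essentially the same route as the paper's own proof: invoke Theorem \ref{thm31} to reduce every exponent to a single value, count $|V(\Gamma_\Omega)|$ as $2n-1$ or $2n-2$ according to the parity of $n$, and multiply $\binom{|V(\Gamma_\Omega)|}{2}$ by the resulting monomial. No further comment is needed.
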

\begin{proof}
\textbf{Case 1}. n is odd. Since $|\Gamma_\Omega|=2n-1$, there are ${{2n-1}\choose{2}}=(n-1)(2n-1)$ possibilities of distinct pairs of vertices. By Theorem \ref{thm31}, $D(u, v)=2n-2$ for any $u, v \in \Gamma_\Omega$. Then $D(\Gamma_\Omega, x)=\sum_{\{u,v\}}x^{D(u,v)}={{2n-1}\choose{2}}x^{2n-2}=(n-1)(2n-1)x^{2n-2}$.\\ 
\textbf{Case 2}. n is even. We have that $|\Gamma_\Omega|=2n-2$ and the possibility of taking distinct pairs of vertices form $\Gamma_\Omega$ is ${{2n-2}\choose{2}}=(n-1)(2n-3)$. From Theorem \ref{thm31}, we deduce that $D(\Gamma_\Omega, x)=\sum_{\{u,v\}}x^{D(u,v)}={{2n-2}\choose{2}}x^{2n-3}=(n-1)(2n-3)x^{2n-3}$.   
\end{proof}

\begin{corollary}
For the graph $\Gamma_\Omega$,
$$
dd(\Gamma_\Omega)=\left\{
\begin{tabular}{ll}
$2(n-1)^2(2n-1)$, & \mbox{ if }$n$ is odd;\\
$(n-1)(2n-3)^2$, & \mbox{ if }$n$ is even.\\
\end{tabular}
\right.
$$
\end{corollary}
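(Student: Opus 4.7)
The plan is to invoke the definition given in the introduction: the detour index $dd(\Gamma_\Omega)$ is the first derivative of the detour polynomial $D(\Gamma_\Omega,x)$ evaluated at $x=1$. Since Theorem \ref{thm32} already gives a closed form for $D(\Gamma_\Omega,x)$ as a single monomial in each parity case, the entire argument reduces to differentiating a monomial of the form $cx^k$ and substituting $x=1$, which yields $ck$.

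Concretely, I would split into two cases according to the parity of $n$. When $n$ is odd, I would take $D(\Gamma_\Omega,x)=(n-1)(2n-1)x^{2n-2}$, differentiate to obtain $(n-1)(2n-1)(2n-2)x^{2n-3}$, evaluate at $x=1$, and then rewrite $2n-2=2(n-1)$ to get the stated value $2(n-1)^2(2n-1)$. When $n$ is even, I would start from $D(\Gamma_\Omega,x)=(n-1)(2n-3)x^{2n-3}$, differentiate to $(n-1)(2n-3)(2n-3)x^{2n-4}$, and evaluate at $x=1$ to obtain $(n-1)(2n-3)^2$.

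There is essentially no obstacle here: the corollary is a one-line consequence of Theorem \ref{thm32} together with the definition of $dd(\Gamma)$. The only thing worth being careful about is the cosmetic repackaging in the odd case, where one must recognize $(2n-1)(2n-2)=2(n-1)(2n-1)$ in order to match the form stated in the corollary. No case analysis on the graph structure or further use of the centralizer computations from Section 2 is required, since all the combinatorial content has already been absorbed into Theorem \ref{thm32}.
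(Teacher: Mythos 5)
Your proposal is correct and follows exactly the paper's own route: the paper likewise derives the corollary by differentiating the detour polynomial from Theorem \ref{thm32} and evaluating at $x=1$, with the same algebraic simplification $(2n-1)(2n-2)=2(n-1)(2n-1)$ in the odd case. Nothing is missing.
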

\begin{proof}
It is clear that $dd(\Gamma_\Omega)=\frac{d}{dx}(D(\Gamma_\Omega, x))|_{x=1}$. From Theorem \ref{thm32}, the result follows.
\end{proof}

\begin{theorem}\label{thm34}
Let $\Gamma_\Omega$ be a non-commuting graph on $D_{2n}$, where $\Omega=\Omega_1 \cup \Omega_2$.
\begin{enumerate}
\item When $n$ is odd, then 
$$
ecc(v)=\left\{
\begin{tabular}{ll}
$2$, & \mbox{ if }$v \in \Omega_1$;\\
$1$, & \mbox{ if }$v \in \Omega_2$.\\
\end{tabular}
\right.
$$
\item When $n$ is even, then $ecc(v)=2$ for each $v \in \Omega$.
\end{enumerate}
\end{theorem}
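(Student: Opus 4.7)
The plan is to read off everything from the adjacency structure already established in Theorem \ref{thm23}, together with the observation that every $r^i \in \Omega_1$ fails to commute with every $sr^j \in \Omega_2$ (since $r^i \notin Z(D_{2n})$ forces $sr^j \cdot r^i = r^{-i} \cdot sr^j \neq r^i \cdot sr^j$). I would first record three structural facts as the backbone of the argument: (a) $\Omega_1$ is an independent set in $\Gamma_\Omega$; (b) $\Omega_2$ induces $K_n$ when $n$ is odd and $K_n-\frac{n}{2}K_2$ when $n$ is even, so the only non-adjacencies inside $\Omega_2$ in the even case are the pairs $\{sr^i, sr^{\frac{n}{2}+i}\}$; and (c) every vertex of $\Omega_1$ is adjacent to every vertex of $\Omega_2$.

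For Part 1, assume $n$ is odd. Any $v \in \Omega_2$ is adjacent to all other vertices of $\Omega$ by (b) and (c), so $ecc(v)=1$. For $v \in \Omega_1$ and any other vertex $u$, either $u \in \Omega_2$ (giving $d(u,v)=1$) or $u \in \Omega_1$, in which case $u$ and $v$ are non-adjacent by (a) but both joined to every $w \in \Omega_2$ by (c), so the path $u-w-v$ yields $d(u,v)=2$. Taking the maximum gives $ecc(v)=2$.

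For Part 2, assume $n$ is even. The argument for $v \in \Omega_1$ is identical to the odd case, yielding $ecc(v)=2$. For $v=sr^i \in \Omega_2$, by (b) and (c) the only vertex of $\Omega$ not adjacent to $v$ is $sr^{\frac{n}{2}+i}$; picking any $w \in \Omega_1$, the path $v-w-sr^{\frac{n}{2}+i}$ gives $d(v,sr^{\frac{n}{2}+i})=2$, and all other vertices are at distance $1$ from $v$. Hence $ecc(v)=2$.

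There is no real obstacle in this argument; the only sanity check is that $\Omega_1$ is nonempty so that the two-step paths witnessing eccentricity $2$ actually exist, and this is immediate since $n \geq 3$ forces $|\Omega_1| \geq n-2 \geq 1$ (and in the even case $n \geq 4$ gives $|\Omega_1|=n-2 \geq 2$, so even two-step paths between distinct vertices of $\Omega_1$ via $\Omega_2$ are available).
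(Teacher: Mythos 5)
Your proposal is correct and follows essentially the same route as the paper: it reads off the eccentricities from the adjacency structure established in Theorem \ref{thm23} (independence of $\Omega_1$, completeness or near-completeness of $\Omega_2$, and full adjacency between $\Omega_1$ and $\Omega_2$). You are merely more explicit than the paper in exhibiting the two-step paths that witness distance $2$ and in checking that $\Omega_1$ is nonempty, which are worthwhile but not substantively different details.
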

\begin{proof}
1. \textbf{When $n$ is odd}. There is no edge between any pair of vertices in $\Omega_1$ and each vertex in $\Omega_2$ is adjacent to every vertex in $\Omega$. So the maximum distance between any vertex of $\Omega_1$ and the other vertices in $\Omega$ is 2 and the maximum distance between any vertex of $\Omega_2$ and the other vertices in $\Omega$ is 1.\\
2. \textbf{When $n$ is even}. Again, There is no edge between any pair of vertices in $\Omega_1$. Also, each vertex in $\Omega_1$ is adjacent to every vertex in $\Omega_2$. Thus, $ecc(v)=2$ for each $v \in \Omega_1$. By Theorem \ref{thm23}, the subgraph $\Gamma_{\Omega_2}$ is not a complete graph because there is no edge between the vertices $sr^i$ and $sr^{i+\frac{n}{2}}$. This means that the maximum distance between any vertex in $\Omega_2$ and any other vertex in $\Omega$ is 2, so $ecc(v)=2$ for each $v \in \Omega_2$.   
\end{proof}

From above theorem, we can have the following.

\begin{theorem}
Let $\Gamma_\Omega$ be a non-commuting graph on $D_{2n}$, where $\Omega=\Omega_1 \cup \Omega_2$. Then
\begin{enumerate}
\item 
$$
\Xi(\Gamma_\Omega, x)=\left\{
\begin{tabular}{ll}
$n(n-1)x^2+2n(n-1)x$, & \mbox{ if }$n$ is odd;\\
$3n(n-2)x^2$, & \mbox{ if }$n$ is even.\\
\end{tabular}
\right.
$$
\item 
$$
\Theta(\Gamma_\Omega, x)=\left\{
\begin{tabular}{ll}
$(n-1)x^2+nx$, & \mbox{ if }$n$ is odd;\\
$2(n-1)x^2$, & \mbox{ if }$n$ is even.\\
\end{tabular}
\right.
$$
\end{enumerate}
\end{theorem}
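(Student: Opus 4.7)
The plan is to apply the definitions of the two polynomials directly, exploiting the fact that all vertices in $\Omega_1$ share one common (degree, eccentricity) pair and all vertices in $\Omega_2$ share another, so the sums collapse into very few terms. Specifically, I would begin by recording the sizes $|\Omega_1|=n-1$ when $n$ is odd and $|\Omega_1|=n-2$ when $n$ is even (a direct consequence of the description of $Z(D_{2n})$ in the introduction), together with $|\Omega_2|=n$ in both cases.

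For the eccentric connectivity polynomial, the idea is to split the sum $\Xi(\Gamma_\Omega,x)=\sum_{u\in\Omega_1}\deg_{\Gamma_\Omega}(u)x^{ecc(u)}+\sum_{u\in\Omega_2}\deg_{\Gamma_\Omega}(u)x^{ecc(u)}$ and then plug in the uniform degree from Theorem \ref{thm22} and the uniform eccentricity from Theorem \ref{thm34}. When $n$ is odd, this yields $(n-1)\cdot n\cdot x^2+n\cdot(2n-2)\cdot x$, which simplifies to $n(n-1)x^2+2n(n-1)x$. When $n$ is even, both partial sums contribute to $x^2$ only, giving $(n-2)\cdot n\cdot x^2+n\cdot(2n-4)\cdot x^2=3n(n-2)x^2$ after factoring.

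For the total eccentricity polynomial the calculation is even more transparent, since each vertex contributes a single monomial $x^{ecc(u)}$. I would just count: in the odd case there are $n-1$ vertices of eccentricity $2$ (the elements of $\Omega_1$) and $n$ of eccentricity $1$ (the elements of $\Omega_2$), producing $(n-1)x^2+nx$; in the even case all $|\Omega|=(n-2)+n=2(n-1)$ vertices have eccentricity $2$, producing $2(n-1)x^2$.

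There is no substantial obstacle here: the entire argument is a direct assembly of Theorems \ref{thm22} and \ref{thm34} with the correct cardinalities of $\Omega_1$ and $\Omega_2$. The only point that requires a moment of care is remembering that $|\Omega_1|$ depends on the parity of $n$ because the center has different sizes in the two cases; once that is tracked, both parts of the theorem follow by routine arithmetic, so I would present the proof as two short case analyses split by the parity of $n$.
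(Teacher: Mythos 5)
Your proposal is correct and is exactly the paper's argument: the paper's own proof simply states that the result follows from Theorems \ref{thm22} and \ref{thm34}, and you have carried out the same direct substitution of the uniform degrees and eccentricities, with the correct parity-dependent counts $|\Omega_1|=n-1$ or $n-2$ and $|\Omega_2|=n$. Your arithmetic in both cases checks out.
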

\begin{proof}
The proof follows directly from Theorem \ref{thm22} and Theorem \ref{thm34}.
\end{proof}
From the above theorem, one can obtain the eccentric connectivity index and the total eccentricity of a graph $\Gamma_\Omega$ from their corresponding polynomials by computing their first derivatives at $x = 1$.

\begin{corollary}
Let $\Gamma_\Omega$ be a non-commuting graph on $D_{2n}$, where $\Omega=\Omega_1 \cup \Omega_2$. Then
$$
\xi^c(\Gamma_\Omega)=\left\{
\begin{tabular}{ll}
$4n(n-1)$, & \mbox{ if }$n$ is odd;\\
$6n(n-2)$, & \mbox{ if }$n$ is even.\\
\end{tabular}
\right.
$$
\end{corollary}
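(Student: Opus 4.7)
The plan is to compute $\xi^c(\Gamma_\Omega)$ directly from its definition as the first derivative of the eccentric connectivity polynomial $\Xi(\Gamma_\Omega,x)$ evaluated at $x=1$, using the explicit formula for $\Xi(\Gamma_\Omega,x)$ supplied by the preceding theorem. So all the structural work (determining degrees via Theorem \ref{thm22} and eccentricities via Theorem \ref{thm34}) has already been absorbed into that polynomial, and the corollary is a one-step calculus consequence.

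Concretely, I would split into the two parity cases mirroring the form of $\Xi(\Gamma_\Omega,x)$. For $n$ odd, the polynomial $\Xi(\Gamma_\Omega,x) = n(n-1)x^2 + 2n(n-1)x$ differentiates to $2n(n-1)x + 2n(n-1)$, which evaluated at $x=1$ gives $4n(n-1)$. For $n$ even, $\Xi(\Gamma_\Omega,x) = 3n(n-2)x^2$ differentiates to $6n(n-2)x$, which evaluated at $x=1$ gives $6n(n-2)$. This matches the two branches of the claimed formula.

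As a sanity check that should be mentioned (even if only briefly), one can cross-verify the values against the raw definition $\xi^c(\Gamma) = \sum_{u\in V(\Gamma)} \deg_\Gamma(u)\, ecc(u)$. In the odd case, the $n-1$ vertices of $\Omega_1$ each contribute $n\cdot 2$ and the $n$ vertices of $\Omega_2$ each contribute $(2n-2)\cdot 1$, giving $2n(n-1) + n(2n-2) = 4n(n-1)$; in the even case, all $2n-2$ vertices have eccentricity $2$, so the sum equals twice the degree sum, and since the degree sum is $2|E(\Gamma_\Omega)| = 3n(n-2)$ (from the edge-count theorem), we recover $6n(n-2)$.

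There is no real obstacle here; the only thing to be careful about is remembering that $\xi^c$ is obtained by differentiating $\Xi$ (rather than, say, by substituting $x=1$), and not to confuse it with the total eccentricity $\Theta'(\Gamma_\Omega,1)$. Thus the proof amounts to citing the preceding theorem, differentiating each branch of $\Xi(\Gamma_\Omega,x)$, and setting $x=1$.
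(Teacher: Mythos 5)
Your proposal is correct and matches the paper's (implicit) argument exactly: the paper derives this corollary by differentiating the eccentric connectivity polynomial from the preceding theorem and evaluating at $x=1$, which is precisely your computation. The cross-check against the raw definition $\sum_{u}\deg_{\Gamma_\Omega}(u)\,ecc(u)$ is a nice extra confirmation but not needed.
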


\section{The mean distance of the graph $\Gamma_{D_{2n}}$}

Through this section we find the mean (average) distance of the graph $\Gamma_{D_{2n}}$. 

\begin{lemma}\label{lemma1}
In the graph $\Gamma_{D_{2n}}$, where $n$ is odd, the transmission of each vertex $r^i$ is $\sigma (r^i, \Gamma_{D_{2n}})=3n-4$ for all
$1\leq i\leq n-1$ and the transmission of a vertex $sr^i$ is 
$\sigma (sr^i,\Gamma_{D_{2n}})=2n-2$ for all $1\leq i\leq n$.
\end{lemma}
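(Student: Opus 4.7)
The plan is to read off the distances from the adjacency structure established in Section 2 (specifically Theorem \ref{thm23}) and then simply count. Since $n$ is odd, $Z(D_{2n})=\{1\}$, so $|\Omega_1|=n-1$ and $|\Omega_2|=n$. By Theorem \ref{thm23}, the induced subgraph on $\Omega_1$ is edgeless, the induced subgraph on $\Omega_2$ is $K_n$, and (by Theorem \ref{thm22} or a direct centralizer check) every vertex of $\Omega_1$ is joined to every vertex of $\Omega_2$. This three-line description of $\Gamma_{D_{2n}}$ already determines every pairwise distance.

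For a fixed $r^i \in \Omega_1$ with $1\le i\le n-1$: the $n$ vertices in $\Omega_2$ are all at distance $1$, and each of the remaining $n-2$ vertices $r^j\in\Omega_1$ (with $j\ne i$, $j\ne n$) is at distance $2$, via any intermediate vertex in $\Omega_2$ (a path $r^i - sr^k - r^j$ exists for any $k$). Hence
\[
\sigma(r^i,\Gamma_{D_{2n}}) \;=\; 1\cdot n + 2\cdot(n-2) \;=\; 3n-4,
\]
which gives the first assertion.

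For a fixed $sr^i\in\Omega_2$: the other $n-1$ vertices of $\Omega_2$ are all at distance $1$ (since $\Omega_2$ induces $K_n$), and the $n-1$ vertices of $\Omega_1$ are all at distance $1$ (since every $\Omega_1$--$\Omega_2$ edge is present). Therefore
\[
\sigma(sr^i,\Gamma_{D_{2n}}) \;=\; 1\cdot(n-1) + 1\cdot(n-1) \;=\; 2n-2,
\]
giving the second assertion.

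There is no real obstacle here; the only thing to be careful about is the correct vertex counts (remembering that $r^n=1\in Z(D_{2n})$ is excluded from $\Omega_1$, so $|\Omega_1|=n-1$ rather than $n$) and the fact that in the odd case every vertex of $\Omega_2$ has eccentricity $1$, so its transmission is just the number of other vertices $|V(\Gamma_{D_{2n}})|-1 = 2n-2$.
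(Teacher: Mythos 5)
Your proof is correct and follows essentially the same route as the paper's: identify the adjacency structure (edgeless on $\Omega_1$, complete on $\Omega_2$, all cross edges present), read off that distances from $r^i$ are $1$ to the $n$ vertices of $\Omega_2$ and $2$ to the other $n-2$ vertices of $\Omega_1$, and that all $2n-2$ distances from $sr^i$ equal $1$. The counts $n + 2(n-2) = 3n-4$ and $(n-1)+(n-1)=2n-2$ match the paper's computation exactly.
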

\begin{proof}
The vertices set of the graph $\Gamma_{D_{2n}}$ is 
$V(\Gamma_{D_{2n}})=\{r^i,sr^j: 1\leq i< n, 1\leq j\leq n\}$. 
Then $|V(\Gamma_{D_{2n}})|=2n-1$, where $n$ is odd. 
A vertex $r^i$ is adjacent with all vertices $sr^j$ for all 
$1\leq j\leq n$, so, $d(r^i,sr^j)=1$ for all $1\leq i\leq n-1$
and all $1\leq j\leq n$. While a vertex $r^i$ is not adjacent with 
$r^j$ for all $i\ne j$, $1\leq i\leq n-1$ and $1\leq j\leq n$, 
then $d(r^i,r^j)=2$ for all $1\leq i\leq n-1$, $1\leq j\leq n$
 and $i\ne j$. So, 
$$\sigma (r^i,\Gamma_{D_{2n}})=\Sigma_{\substack{1\leq j<n\\j\ne i}}d(r^i,r^j)+\Sigma_{1\leq j\leq n}d(r^i,sr^j)=2(n-2)+n=3n-4$$
for all $1\leq i\leq n-1$.
On the other hand every vertex $sr^i$ is adjacent with $sr^j$ for all $i\ne j$, $1\leq i,j\leq n$. Therefore, $d(sr^i,sr^j)=1$, for all $i\ne j$, $1\leq i,j\leq n$. Also, every vertex $sr^i$ is adjacent with $r^j$, then $d(sr^i, r^j)=1$ for all $1\leq i\leq n$, 
$1\leq j\leq n-1$. So, 
$$\sigma (sr^i,\Gamma_{D_{2n}})=\Sigma_{\substack{1\leq i,j\leq n\\i\ne j}}d(sr^i,sr^j)+\Sigma_{1\leq j<n}d(sr^i,r^j)=(n-1)+(n-1)=2n-2,$$
for all $1\leq i\leq n$.
\end{proof}

\begin{lemma}\label{lemma2}
In the graph $\Gamma_{D_{2n}}$, where $n$ is even, the transmission of each vertex $r^i$ is $\sigma (r^i, \Gamma_{D_{2n}})=3n-6$ for all
$1\leq i\leq n-1$ and the transmission of a vertex $sr^i$ is 
$\sigma (sr^i,\Gamma_{D_{2n}})=2n-2$ for all $1\leq i\leq n.$
\end{lemma}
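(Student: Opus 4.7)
The plan is to mirror the argument of Lemma \ref{lemma1}, but with the two structural changes that occur when $n$ is even: the center removes an extra element $r^{n/2}$ from $\Omega_1$, and $\Omega_2$ is no longer complete (the pairs $sr^i, sr^{i+n/2}$ are non-adjacent, by Theorem \ref{thm23}). First I would fix notation: $V(\Gamma_{D_{2n}})=\Omega_1\cup\Omega_2$ with $|\Omega_1|=n-2$ and $|\Omega_2|=n$, so $|V|=2n-2$.

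Next I would read off the three possible distances from Theorems \ref{thm23} and \ref{thm34}. Two vertices of $\Omega_1$ are never adjacent, but each is joined to every vertex of $\Omega_2$, so through any vertex of $\Omega_2$ we get a path of length $2$; hence $d(r^i,r^j)=2$ for distinct $r^i,r^j\in\Omega_1$. Any vertex of $\Omega_1$ is at distance $1$ from every vertex of $\Omega_2$. Within $\Omega_2$, the only non-edges are between $sr^i$ and $sr^{i+n/2}$; such a pair is connected via any vertex of $\Omega_1$ (which is nonempty since $n\geq 4$), so their distance is $2$, while all other pairs in $\Omega_2$ have distance $1$.

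For the transmission of $r^i\in\Omega_1$, I would split the sum as
\[
\sigma(r^i,\Gamma_{D_{2n}})=\sum_{r^j\in\Omega_1,\,j\neq i}d(r^i,r^j)+\sum_{sr^j\in\Omega_2}d(r^i,sr^j)=2(n-3)+n=3n-6,
\]
where the factor $n-3$ comes from $|\Omega_1|-1=n-3$. For the transmission of $sr^i\in\Omega_2$, I would split according to the two types of neighbors and the single non-neighbor in $\Omega_2$:
\[
\sigma(sr^i,\Gamma_{D_{2n}})=\sum_{r^j\in\Omega_1}d(sr^i,r^j)+\sum_{\substack{sr^j\in\Omega_2\\ j\neq i,\,j\neq i+n/2}}d(sr^i,sr^j)+d(sr^i,sr^{i+n/2}),
\]
which evaluates to $(n-2)+(n-2)+2=2n-2$.

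The only real pitfall is bookkeeping: one must remember that $r^{n/2}$ is excluded from $\Omega_1$, giving $n-2$ rather than $n-1$ rotational vertices, and that each $sr^i$ has exactly one non-neighbor in $\Omega_2$ (contributing a distance-$2$ term rather than a distance-$1$ term). Everything else is arithmetic, and the formulas then drop out as claimed.
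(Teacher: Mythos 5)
Your proposal is correct and follows essentially the same route as the paper's proof: identify the distances ($1$ from $\Omega_1$ to $\Omega_2$, $2$ within $\Omega_1$, and $1$ within $\Omega_2$ except for the single non-adjacent pair $sr^i, sr^{i+n/2}$ at distance $2$), then sum with the counts $|\Omega_1|=n-2$ and $|\Omega_2|=n$. Your accounting $2(n-3)+n=3n-6$ and $(n-2)+(n-2)+2=2n-2$ matches the paper's computation exactly, and you are in fact slightly more careful than the paper in exhibiting the length-two paths that justify the distance-$2$ claims.
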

\begin{proof} Let $M=\{1,2,\ldots,n-1\}-\{n/2\}$. Then the vertices set of the graph $\Gamma_{D_{2n}}$, where $n$ is even, is 
$V(\Gamma_{D_{2n}})=\{r^i,sr^j: i\in M, 1\leq j\leq n\}$. 
So, $|V(\Gamma_{D_{2n}})|=2n-2$. A vertex $r^i$ is adjacent with all vertices $sr^j$ for all $i\in M$ and all $1\leq j\leq n$. Thus, $d(r^i,sr^j)=1$ for all $i\in M$ and all $1\leq j\leq n$. 
Notice that every two vertices $r^i$ and $r^j$ are non-adjacent 
for all $i,j\in M$ and $i\ne j$, then $d(r^i,r^j)=2$ 
for all $i,j\in M$ and $i\ne j$. So,
$$\sigma (r^i,\Gamma_{D_{2n}})=\Sigma_{\substack{j\in S\\j\ne i}}d(r^i,r^j)+\Sigma_{1\leq j\leq n}d(r^i,sr^j)=2(n-3)+n=3n-6$$
for all $i\in M$.
Also, every vertex $sr^i$ is adjacent with $sr^j$ for all 
$i\ne j$, $1\leq i\leq n/2$, and all $j\in \{1,2,\ldots,n-1\}-\{i+n/2\}$, 
then $d(sr^i,sr^j)=1$, for all $j\in \{1,2,\ldots,n-1\}-\{i+n/2\}$, 
and $d(sr^i,sr^{i+n/2})=2$, for all $1\leq i\leq n/2$.
Since each vertex $sr^i$ is adjacent with all vertices $r^j$, for all $1\leq i\leq n$, 
and $j\in M$, then $d(sr^i, r^j)=1$. Therefore, 
$$\sigma (sr^i,\Gamma_{D_{2n}})=\Sigma_{\substack{1\leq j\leq n\\j\ne i}}d(sr^i,sr^j)+\Sigma_{j\in S}d(sr^i,r^j)=(n-2)+2+(n-2)=2n-2,$$
for all $1\leq i\leq n$.
\end{proof}

\begin{theorem}
The mean distance of the graph $\Gamma_{D_{2n}}$, where $n$ is odd, is $\mu (\Gamma_{D_{2n}})=\frac{5n-4}{4n-2}$.
\end{theorem}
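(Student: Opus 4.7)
The plan is to apply the definition $\mu(\Gamma)=\sigma(\Gamma)/[p(p-1)]$ directly, using Lemma \ref{lemma1} to supply the per-vertex transmissions. Since $n$ is odd, $Z(D_{2n})=\{1\}$, so the vertex set partitions as $V(\Gamma_{D_{2n}})=\Omega_1\cup\Omega_2$ with $|\Omega_1|=n-1$ and $|\Omega_2|=n$, giving $p=2n-1$.

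First I would compute the total transmission by summing over the two classes of vertices. Lemma \ref{lemma1} tells us $\sigma(r^i,\Gamma_{D_{2n}})=3n-4$ for each of the $n-1$ vertices in $\Omega_1$, and $\sigma(sr^i,\Gamma_{D_{2n}})=2n-2$ for each of the $n$ vertices in $\Omega_2$. Therefore
$$\sigma(\Gamma_{D_{2n}})=(n-1)(3n-4)+n(2n-2).$$
The key algebraic step is to notice that $n(2n-2)=2n(n-1)$, so the right-hand side factors as $(n-1)\bigl[(3n-4)+2n\bigr]=(n-1)(5n-4)$.

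Finally I would divide by $p(p-1)=(2n-1)(2n-2)=2(n-1)(2n-1)$. The factor $(n-1)$ cancels, leaving
$$\mu(\Gamma_{D_{2n}})=\frac{(n-1)(5n-4)}{2(n-1)(2n-1)}=\frac{5n-4}{2(2n-1)}=\frac{5n-4}{4n-2},$$
which is the claimed formula.

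There is no genuine obstacle here; the proof is essentially a one-line summation plus a clean cancellation, with all the combinatorial content already absorbed into Lemma \ref{lemma1}. The only thing to be careful about is indexing: $\Omega_1$ contains $n-1$ elements (not $n$), since the identity is excluded, and using $n$ here would give the wrong total. Once $\sigma(\Gamma_{D_{2n}})$ is written as $(n-1)(5n-4)$, the shared factor with $p(p-1)$ makes the simplification to $\frac{5n-4}{4n-2}$ immediate.
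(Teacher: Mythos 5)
Your proposal is correct and follows the same route as the paper: sum the transmissions from Lemma \ref{lemma1} over the $n-1$ rotation vertices and $n$ reflection vertices to get $\sigma(\Gamma_{D_{2n}})=(n-1)(3n-4)+n(2n-2)=5n^2-9n+4$, then divide by $(2n-1)(2n-2)$. The only cosmetic difference is that you factor out $(n-1)$ before cancelling, while the paper expands to $5n^2-9n+4$ and simplifies at the end.
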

\begin{proof}
By Lemma~\ref{lemma1}, we see that the transmission of 
the graph $\Gamma_{D_{2n}}$ is
\begin{align*}
\sigma(\Gamma_{D_{2n}})&=\Sigma_{i=1}^{n-1}\sigma(r^i,\Gamma_{D_{2n}})+\Sigma_{i=1}^n\sigma(sr^i,\Gamma_{D_{2n}})\\
&=(n-1)(3n-4)+n(2n-2)\\
&=5n^2-9n+4.
\end{align*} 
Notice that $|V(\Gamma_{D_{2n}})|=2n-1$.
Therefore, $\mu(\Gamma_{D_{2n}})=\frac{\sigma(\Gamma_{D_{2n}})}{ |V(\Gamma_{D_{2n}})|(|V(\Gamma_{D_{2n}})|-1)}
=\frac{5n^2-9n+4}{(2n-1)(2n-2)}=\frac{5n-4}{4n-2}$.
\end{proof}

\begin{theorem}
The mean distance of the graph $\Gamma_{D_{2n}}$, where $n$ is even, is $\mu (\Gamma_{D_{2n}})=\frac{5n^2-14n+12}{(2n-2)(2n-3)}$.
\end{theorem}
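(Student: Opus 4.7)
The plan is to mimic the odd-case argument, using Lemma~\ref{lemma2} as the engine. The key observation is that when $n$ is even the center is $\{1, r^{n/2}\}$, so the rotation part $\Omega_1$ contributes $n-2$ vertices (indexed by $i\in M = \{1,\ldots,n-1\}\setminus\{n/2\}$), while the reflection part $\Omega_2$ contributes $n$ vertices. This gives the total order $|V(\Gamma_{D_{2n}})| = 2n-2$, which matches what Lemma~\ref{lemma2} uses in its vertex count, and provides the denominator $(2n-2)(2n-3)$ needed for the mean-distance formula.

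First I would write the transmission of the whole graph as a sum over vertex types:
\[
\sigma(\Gamma_{D_{2n}}) = \sum_{i\in M}\sigma(r^i,\Gamma_{D_{2n}}) + \sum_{i=1}^{n}\sigma(sr^i,\Gamma_{D_{2n}}).
\]
Then I would substitute the per-vertex transmissions supplied by Lemma~\ref{lemma2}, namely $3n-6$ for each $r^i$ and $2n-2$ for each $sr^i$. Since there are $n-2$ vertices of the first type and $n$ of the second, this yields
\[
\sigma(\Gamma_{D_{2n}}) = (n-2)(3n-6) + n(2n-2) = 3(n-2)^2 + 2n(n-1),
\]
which simplifies to $5n^2 - 14n + 12$.

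Finally I would plug into the definition $\mu(\Gamma) = \sigma(\Gamma)/(p(p-1))$ with $p = 2n-2$, obtaining
\[
\mu(\Gamma_{D_{2n}}) = \frac{5n^2-14n+12}{(2n-2)(2n-3)},
\]
which is exactly the claimed expression. There is no real obstacle here: once Lemma~\ref{lemma2} is in hand, the proof is a one-line aggregation followed by an arithmetic simplification. The only step requiring a moment of care is making sure the vertex count for $\Omega_1$ is $n-2$ rather than $n-1$ (the parity of $n$ removes the extra central element $r^{n/2}$), since this is where the even case genuinely diverges from the odd one and where an off-by-one error would most likely creep in.
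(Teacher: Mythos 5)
Your proposal is correct and follows exactly the same route as the paper: aggregate the per-vertex transmissions from Lemma~\ref{lemma2} over the $n-2$ rotation vertices and $n$ reflection vertices to get $\sigma(\Gamma_{D_{2n}})=(n-2)(3n-6)+n(2n-2)=5n^2-14n+12$, then divide by $p(p-1)$ with $p=2n-2$. The arithmetic checks out and your attention to the vertex count $|M|=n-2$ is precisely the point the paper also relies on.
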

\begin{proof}
By using Lemma~\ref{lemma2}, we can find the transmission of 
the graph $\Gamma_{D_{2n}}$ which is
\begin{align*}
\sigma(\Gamma_{D_{2n}})&=\Sigma_{\substack{i=1\\i\ne n/2}}^{n-1}\sigma(r^i,\Gamma_{D_{2n}})+\Sigma_{i=1}^n\sigma(sr^i,\Gamma_{D_{2n}})\\
&=(n-2)(3n-6)+n(2n-2)\\
&=5n^2-14n+12.
\end{align*} 
Notice that $|V(\Gamma_{D_{2n}})|=2n-2$.
Therefore, $\mu(\Gamma_{D_{2n}})=\frac{\sigma(\Gamma_{D_{2n}})}{ |V(\Gamma_{D_{2n}})|(|V(\Gamma_{D_{2n}})|-1)}
=\frac{5n^2-14n+12}{(2n-2)(2n-3)}$.
\end{proof}

\end{document}